\documentclass[fleqn,10pt,twocolumn]{SICE_ISCS}
\usepackage{amsmath,amsxtra,amssymb,latexsym, amscd}
\usepackage[mathscr]{eucal}
\def\leq{\leqslant}
\def\geq{\geqslant}


\newtheorem{lemma}[theorem]{Lemma}

\newtheorem{definition}[theorem]{Definition}

\numberwithin{equation}{section}



\def\h2{\hskip 2 cm}

\title{Existence and Stability of Periodic Solutions of a Lotka-Volterra System }
\author{Nguyen Thi Hoai Linh$^{\dagger}$, Ta Hong Quang$^{\ddagger}$, T\d{a} Vi$\hat{\d{e}}$t T$\hat{\rm o}$n$^{\dagger}$}
\affils{$^{\dagger}$Department of Information and Physical Sciences, Graduate School of Information Science and Technology, Osaka University, Osaka, Japan\\
E-mail: taviet.ton(at)ist.osaka-u.ac.jp\\
$^{\ddagger}$Department of Mathematics, Hanoi II Pedagogical  University, Vinh Phuc, Vietnam
}
\abstract{%
In this paper, we study a Lotka-Volterra model which contains  two prey and one predator with the Beddington-DeAngelis functional responses. First, we establish a set of sufficient conditions for existence of positive periodic solutions. Second,  we investigate global asymptotic stability of boundary periodic solutions. Finally, we present some numerical examples.}

\keywords{%
Lotka-Volterra system; Periodic solution; Asymptotic stability; Lyapunov function.
}

\begin{document}

\maketitle


\section{Introduction}
The dynamical relationship between predators and prey has been studied by several authors for a long time. In those researches, various forms of functional responses have been used. Here a functional response means  the average number of prey killed per individual predator per unit of time. Some biologists have argued that in many situation, especially when predators have to search for food, functional responses should depend on both prey's and predator's densities, see \cite{APS,Do,JE,SG} and references therein. 

Let us consider a population of three species, say a Lotka-Volterra model,  with the following properties:
\begin{itemize}
\item [\rm (i)] one species is a predator of two competitive other species.
\item [\rm (ii)]  the predator consumes prey with the functional response given by Beddington \cite{Be} and DeAngelis et al. \cite{DGO}
\end{itemize}
 There are many models having the property (i) or (ii) with diffusion in a constant environment \cite{CC1,CC2,CD}, \cite{Linh}, \cite{TVT1,TVT2,TVT3}. 
 However, natural environments are usually periodic in time due to the periodicity of seasons. Therefore,  the parameters in these models should be periodic in time. This paper devotes to studying such a  
 Lotka-Volterra model which is performed by a nonlinear system of differential equations:
\begin{eqnarray}
\begin{array}{rcl}
x'_1&=&x_1[a_1(t)-b_{11}(t) x_1-b_{12}(t)x_2]\\
&&- \frac{c_1(t)x_1x_3}{\alpha(t)+\beta(t) x_1+\gamma(t) x_3},\\
x'_2&=&x_2[a_2(t)-b_{21}(t) x_1-b_{22}(t)x_2]\\
&&- \frac{c_2(t)x_2x_3}{\alpha(t)+\beta(t) x_2+\gamma(t) x_3},\\
x'_3&=&x_3\Big[-a_3(t)+ \frac{d_1(t)x_1}{\alpha(t)+\beta(t) x_1+\gamma(t) x_3}\\
&&+\frac{d_2(t)x_2}{\alpha(t)+\beta(t) x_2+\gamma(t) x_3}\Big].
\end{array}
\label{E1}
\end{eqnarray}
Here $x_i(t)$ represents the population density of species $X_i$ at time $t \, (i\geq 1),$ $X_3$ is a predator species and $ X_1, X_2$ are  competitive prey  species.
At time $t$, $a_i(t)$ is the intrinsic growth rate of  $X_i \, (i=1,2)$ and $a_3(t)$ is the death rate of $X_3$; $b_{ij}(t)$  measures the amount of competition between $X_1$ and $X_2 \, (i\ne j, i,j\leq 2),$ and $b_{ii}(t)\, (i\leq 2)$ measures the inhibiting effect of environment on $X_i.$  The predator consumes prey with functional responses: 
$$ \frac{c_1(t)x_1x_3}{\alpha(t)+\beta(t) x_1+\gamma(t) x_3} \text{ and } \frac{c_2(t)x_2x_3}{\alpha(t)+\beta(t) x_2+\gamma(t) x_3};$$
 and contributes to its growth with amounts: 
 $$\frac{d_1(t)x_1}{\alpha(t)+\beta(t) x_1+\gamma(t) x_3} \text{ and } \frac{d_2(t)x_2}{\alpha(t)+\beta(t) x_2+\gamma(t) x_3}.$$
Furthermore, we assume that the parameters $a_i(t), b_{ij}(t), $ $c_i(t),$ $ d_i(t),  \alpha(t), \beta(t), \gamma(t) (1\leq i, j\leq 3) $ are $\omega$-periodic and continuous in $t$ and bounded below by some positive constants.

 In the next section, we present our main results. First, we use the continuation theorem in coincidence degree theory to show existence of  positive periodic solutions of \eqref{E1}. Second, by using   Lyapunov functions we verify global asymptotic stability  of  boundary periodic solutions. Finally, we give numerical examples.
\section{Main results}
For biological reasons we only consider \eqref{E1} with nonnegative initial values, i.e. $x_1(0), x_2(0),x_3(0)\geq 0.$ 
Let $g(t)$ be a function, for a brevity, instead of writing $g(t)$ we write $g$. If $g$ is a bounded continuous function on $\mathbb R$, we denote\\
\centerline{$g^u= \sup_{t \in \mathbb R}\ g(t), \ g^l=\inf_{t \in \mathbb R} g(t),$}
and
$\hat g=\frac{1}{\omega} \int_0^\omega g(t)dt,$
if $g$ is a periodic function with period $\omega$. 
\begin{definition}
A nonnegative solution $x^*(t)$ of \eqref{E1} is called a global asymptotic stable solution  if it attracts any other solution $x(t)$ of \eqref{E1} in the sense that\\
\centerline{$\lim_{t \to \infty}\sum_{i=1}^3|x_i(t)-x_i^*(t)|=0.$}
\end{definition}
\subsection{Existence of  positive periodic solutions}
In this subsection, we shall study existence of  periodic solutions of  \eqref{E1}.  It is not difficult to verify  global existence and uniqueness of nonnegative solutions of  \eqref{E1}.  To show the existence of a positive periodic solution, we shall use the continuation theorem in coincidence degree theory which has been used for some mathematical models of Lotka-Volterra type \cite{L3,TVT2} and references therein. The following are some concepts and results taking from \cite{GM}.

Let $\mathbb X$ and $\mathbb Y$ be two Banach spaces. A linear mapping $L\colon{\mathcal D}(L) \subset \mathbb X \to \mathbb Y$ is called {\it Fredholm} if  it satisfies two conditions:
\begin{itemize}
\item [\rm (i)] $\mathop{\rm Im} L$ is closed and has finite codimension;
\item [\rm (ii)] Ker $L$ has finite dimension.
\end{itemize}
The {\it index} of $L$ is the integer $\mathop{\rm dim} \mathop{\rm Ker} L -\mathop{\rm codim}\mathop{\rm Im} L$.
If $L$ is Fredholm  of index zero, there exist continuous projections $ P\colon  \mathbb X \to \mathbb X$ and $ Q\colon \mathbb Y \to \mathbb Y$ such that $\mathop{\rm Im} P=\mathop{\rm Ker} L, \mathop{\rm Im} L=\mathop{\rm Ker} Q=\mathop{\rm Im} (I-Q),$ and an isomorphism $J\colon  \mathop{\rm Im} Q \to \mathop{\rm Ker} L$. 
It follows that\\
\centerline{$L_p=L|_{{\mathcal D}(L) \cap \mathop{\rm Ker} P}\colon  (I-P) {\mathbb X }\to \mathop{\rm Im} L$}
is invertible. We denote the inverse of that map
by $K_p$. Let $\Omega$ be an open bounded subset of $\mathbb X$. A continuous mapping $N\colon\mathbb X \to\mathbb Y$ is said to be $L$-{\it compact} on $\bar\Omega$ if the following two conditions take place:
\begin{itemize}
\item [\rm (i)] the mapping $QN\colon  \bar\Omega \to \mathbb Y$ is continuous and bounded;
\item [\rm (ii)] $K_p(I-Q)N\colon\bar \Omega\to \mathbb X$ is compact, i.e. it is continuous and   $K_p(I-Q)N(\bar \Omega)$ is relatively compact. 
\end{itemize}
 To introduce the definition of the degree of  $N$ in $\Omega$, for simplicity we assume that ${\mathbb X}={\mathbb R}^N.$  Suppose furthermore that $N$ is smooth on $\bar \Omega$. Let $p\notin \partial \Omega$ be a regular value of $N$, i.e. the equation $N(x)=p$ on $\bar \Omega$ has only a finite number of solutions $x_1,\dots,x_n\in \Omega$ with  nonsingular $DN(x_i)$ for each $i=1,\dots,n$ where $DN(x_i)$ is the Jacobi matrix of $N$ at $x_i$. Then the degree $\deg(N,\Omega,p) $ of $N$ in $\Omega$ at $p$ is defined by the formula 
$$
\deg(N,\Omega,p)=\sum_{i=1}^n \mathop{\rm sgn}\{\det DN(x_i)\}.
$$
\begin{lemma} [Continuation theorem \cite{GM}] \label{lem3}
Let L be a Fredholm mapping of index $0$. Assume that $N\colon  \bar \Omega \to \mathbb Y$ is $L$-compact
on $\bar \Omega$ and satisfies conditions:
\begin{itemize}
\item [\rm (a)] for each $\lambda \in (0, 1)$ every solution of $Lx = \lambda Nx$ is such that $x \notin \partial \Omega$,
\item [\rm (b)] $QN x 	\ne {\bf 0}$ for each $x \in \partial\Omega \cap \mathop{\rm Ker} L$, and
$\deg \{JQN, \Omega \cap \mathop{\rm Ker} L, {\bf 0}\} \ne 0.$
\end{itemize}
Then the operator equation $Lx = Nx$ has at least one solution in ${\mathcal D}(L) \cap $ $\bar\Omega.$
\end{lemma}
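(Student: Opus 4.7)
The plan is to reformulate the operator equation $Lx=Nx$ as a fixed-point equation for a compact operator on $\bar\Omega$ and to apply Leray--Schauder degree together with a homotopy that deforms this equation to one supported entirely on $\mathop{\rm Ker} L$, where hypothesis (b) can be invoked directly.

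First I would establish the equivalence that $Lx=Nx$ holds iff $x$ is a fixed point of $M:=P+(JQ+K_p(I-Q))N$. If $Lx=Nx$, then $QNx=\mathbf 0$ because $Nx\in\mathop{\rm Im} L=\mathop{\rm Ker} Q$, and $(I-P)x=K_p Nx$ because $K_p$ inverts $L$ on $(I-P)\mathbb X$; adding $Px$ and $JQNx=\mathbf 0$ gives $x=Mx$. Conversely, if $x=Mx$, decomposing along the sum $\mathop{\rm Im} P\oplus\mathop{\rm Ker} P$ forces $JQNx=\mathbf 0$, hence $QNx=\mathbf 0$, together with $(I-P)x=K_p(I-Q)Nx=K_p Nx$; applying $L$ and using $LK_p=I$ on $\mathop{\rm Im} L$ yields $Lx=Nx$. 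Because $\mathop{\rm Ker} L$ is finite-dimensional, $P$ has finite rank; combined with $L$-compactness of $N$, this makes $M$ compact, so $I-M$ is a Leray--Schauder map on $\bar\Omega$.

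Next I would introduce the homotopy
\begin{equation*}
H(\lambda,x)=x-Px-JQNx-\lambda K_p(I-Q)Nx,\quad\lambda\in[0,1],
\end{equation*}
and verify that $H(\lambda,x)\ne\mathbf 0$ on $[0,1]\times\partial\Omega$. A zero of $H(\lambda,\cdot)$, projected onto $\mathop{\rm Im} P$ and $\mathop{\rm Ker} P$, yields $QNx=\mathbf 0$ and $(I-P)x=\lambda K_p Nx$; applying $L$ gives $Lx=\lambda Nx$. Hypothesis (a) then forbids $x\in\partial\Omega$ for $\lambda\in(0,1)$; at $\lambda=0$ the equation collapses to $x\in\mathop{\rm Ker} L$ with $QNx=\mathbf 0$, excluded by hypothesis (b); and at $\lambda=1$ we may assume $Lx=Nx$ has no solution on $\partial\Omega$, or else the conclusion already holds. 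Homotopy invariance of Leray--Schauder degree then gives $\deg(I-M,\Omega,\mathbf 0)=\deg(I-P-JQN,\Omega,\mathbf 0)$.

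Finally, every zero of $I-P-JQN$ lies in $\mathop{\rm Ker} L$ (the $\mathop{\rm Im} P$ and $\mathop{\rm Ker} P$ components of the equation decouple), and the restriction to that subspace is $-JQN$. By the reduction property of Leray--Schauder degree, the right-hand side equals, up to the sign $(-1)^{\dim\mathop{\rm Ker} L}$, the Brouwer degree $\deg\{JQN,\Omega\cap\mathop{\rm Ker} L,\mathbf 0\}$, which is nonzero by hypothesis (b). Hence $I-M$ has a zero in $\Omega$, producing a solution of $Lx=Nx$ in $\mathcal{D}(L)\cap\bar\Omega$. The main obstacle will be making the last step rigorous: justifying that the infinite-dimensional Leray--Schauder degree of a compact perturbation of the finite-rank projection $P$ coincides with a Brouwer degree on $\mathop{\rm Ker} L$, which requires either a finite-dimensional Galerkin-type approximation argument or a careful invocation of the reduction theorem, with the orientation carried by the isomorphism $J$ tracked consistently throughout.
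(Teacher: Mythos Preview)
The paper does not give its own proof of this lemma; it is quoted verbatim from Gaines and Mawhin \cite{GM} and used as a black box in the proof of Theorem~\ref{thm:5}. Your sketch is a faithful outline of the standard argument from that reference: the reformulation of $Lx=Nx$ as a fixed-point problem for the compact map $M=P+(JQ+K_p(I-Q))N$, the linear homotopy in $\lambda$ that collapses the $K_p(I-Q)N$ term, and the reduction to Brouwer degree on $\mathop{\rm Ker} L$ are exactly the steps in \cite{GM}. The only point where you are slightly informal is the $\lambda=1$ endpoint of the homotopy, which you correctly handle by the ``either there is a boundary solution and we are done, or the degree is defined'' dichotomy; and the reduction step, where you rightly flag that the key input is that $P+JQN$ has range contained in the finite-dimensional subspace $\mathop{\rm Ker} L$, so the Leray--Schauder reduction theorem applies directly and yields $\deg(I-M,\Omega,\mathbf 0)=(-1)^{\dim\mathop{\rm Ker} L}\deg(JQN,\Omega\cap\mathop{\rm Ker} L,\mathbf 0)$.
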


We now put
\begin{equation*}
\begin{aligned} 
&L_{i1}=\ln\frac{\hat a_i}{\hat b_{ii}}, H_{i1}=L_{i1}+ 2 \hat a_i \omega \quad (i=1,2),\\
&L_{12}=\ln \Big\{\frac{\hat a_1-\hat b_{12} e^{H_{21}}-\widehat{(\frac{c_1}{\gamma })}}{\hat b_{11}}\Big\}, \\
&H_{12}=L_{12}-2 \hat a_1 \omega,\\
&L_{22}=\ln \Big\{\frac{\hat a_2-\hat b_{21} e^{H_{11}}-\widehat{(\frac{c_2}{\gamma })}}{\hat b_{22}}\Big\}, \\
&H_{22}=L_{22}-2 \hat a_2 \omega,\\
&L_{31}=\ln\Big\{\frac{\hat d_1e^{H_{11}}+\hat d_2e^{H_{21}}-\hat a_3 \alpha^l}{\hat a_3 \gamma ^l}\Big\}, \\
&H_{31}=L_{31}+2 \hat a_3 \omega,\\
&L_{32}=\ln[(\hat d_1-\hat a_3 \beta^u)e^{H_{12}}\\
&\hspace{1.2cm}+(\hat d_2-\hat a_3 \beta^u)e^{H_{22}}-2\hat a_3 \alpha^u]-\ln(2\hat a_3 \gamma ^u),\\
& H_{32}=L_{32}-2 \hat a_3 \omega.
\end{aligned}
\end{equation*}
The convention here is that $\ln x = -\infty$ if $x\leq  0$. Under the  conditions
\begin{equation} \label{H2}
\begin{cases}
\hat b_{11}\hat b_{22}\ne \hat b_{12}\hat b_{21},\\
\hat a_1-\hat b_{12} e^{H_{21}}-\widehat{(\frac{c_1}{\gamma })}>0,\\
\hat a_2-\hat b_{21} e^{H_{11}}-\widehat{(\frac{c_2}{\gamma })}>0,\\
\hat d_1e^{H_{11}}+\hat d_2e^{H_{21}}-\hat a_3 \alpha^l>0,\\
(\hat d_1-\hat a_3 \beta^u)e^{H_{12}}+(\hat d_2-\hat a_3 \beta^u)e^{H_{22}}>2\hat a_3 \alpha^u,
\end{cases}
\end{equation}
we shall verify existence of an $\omega$-periodic solution of \eqref{E1}.
\begin{theorem} \label{thm:5}
Let   \eqref{H2} be satisfied. Then \eqref{E1} has at least one positive $\omega$-periodic solution.
\end{theorem}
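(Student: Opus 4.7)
The plan is to apply the continuation theorem of Lemma~\ref{lem3} to a standard logarithmic reformulation of \eqref{E1}. Setting $x_i(t)=e^{u_i(t)}$ and dividing the $i$-th equation of \eqref{E1} by $x_i$ converts the system into
\begin{equation*}
u_i'(t)=F_i\bigl(u_1(t),u_2(t),u_3(t),t\bigr),\qquad i=1,2,3,
\end{equation*}
and positive $\omega$-periodic solutions of \eqref{E1} correspond bijectively to $\omega$-periodic solutions of this new system. I would take
\[
\mathbb X=\mathbb Y=\{u\in C(\mathbb R,\mathbb R^3):u(t+\omega)=u(t)\}
\]
with the sup norm, put $Lu=u'$ on $\{u\in\mathbb X:u'\in\mathbb X\}$, and define $(Nu)(t)=(F_1,F_2,F_3)(u(t),t)$. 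Routine checks show that $L$ is Fredholm of index $0$ with $\mathop{\rm Ker}L$ the constants and $\mathop{\rm Im}L$ the zero-mean functions, that $Pu=Qu=\frac{1}{\omega}\int_0^\omega u\,dt$ provides the required projections, and that $N$ is $L$-compact on every bounded $\bar\Omega$ by Arzel\`a--Ascoli.

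The heart of the proof is the a priori estimate in condition (a) of Lemma~\ref{lem3}. Fix $\lambda\in(0,1)$ and let $u$ solve $Lu=\lambda Nu$. Integrating each coordinate over $[0,\omega]$ kills $u_i'$ and yields three balance identities for the averages of the $F_i$. Choosing $\xi_i,\eta_i\in[0,\omega]$ with $u_i(\xi_i)=\max u_i$ and $u_i(\eta_i)=\min u_i$, the identity for $u_1$ gives $\hat b_{11}e^{u_1(\eta_1)}\leq\hat a_1$, hence $u_1(\eta_1)\leq L_{11}$; combined with $\int_0^\omega|u_1'|\,dt\leq 2\lambda\hat a_1\omega$ (a standard consequence of $\int u_1'=0$ and $u_1'\leq\lambda a_1$) this yields $u_1(t)\leq H_{11}$, and symmetrically $u_2(t)\leq H_{21}$. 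Inserting these upper bounds into the identity at $\xi_1$, using $\frac{c_1e^{u_3}}{\alpha+\beta e^{u_1}+\gamma e^{u_3}}\leq\frac{c_1}{\gamma}$, gives $u_1(\xi_1)\geq L_{12}$ and therefore $u_1(t)\geq H_{12}$; the same for $u_2$. The predator equation is treated analogously: the crude bounds $\alpha^l\leq\alpha+\beta e^{u_i}+\gamma e^{u_3}\leq\alpha^u+\beta^u e^{u_i}+\gamma^u e^{u_3}$ at the relevant extrema produce $H_{32}\leq u_3(t)\leq H_{31}$. The inequalities \eqref{H2} are exactly what is needed to keep every logarithm defining $L_{i2}$ and $L_{32}$ finite.

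I would then set $\Omega=\{u\in\mathbb X:\|u\|_\infty<M\}$ with $M$ chosen larger than $\max_{i,j}|H_{ij}|$ by a fixed margin, which rules out $u\in\partial\Omega$ whenever $Lu=\lambda Nu$ holds. For condition (b), the equation $QNu=\mathbf 0$ with $u$ constant is an algebraic system in $y_i=e^{u_i}$; applying the same one-sided estimates to constants shows that any such root satisfies $H_{i2}<u_i<H_{i1}$, so no root lies on $\partial\Omega\cap\mathop{\rm Ker}L$. Identifying $\mathop{\rm Im}Q$ with $\mathop{\rm Ker}L=\mathbb R^3$ via $J=I$, the Jacobian of $QN$ at an interior root is dominated by the diagonal entries $-\hat b_{ii}e^{u_i}$, and a short expansion shows that its determinant contains $\hat b_{11}\hat b_{22}-\hat b_{12}\hat b_{21}$ as a nonvanishing factor (by the first line of \eqref{H2}). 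Hence $\deg\{JQN,\Omega\cap\mathop{\rm Ker}L,\mathbf 0\}=\pm1\ne 0$, and Lemma~\ref{lem3} yields an $\omega$-periodic $u$; then $x_i=e^{u_i}$ is the desired positive periodic solution.

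The main obstacle is the second paragraph: choosing the right extremal points and sign-splits so that the bounds come out as $L_{ij}$ and $H_{ij}$ without slack, especially for $u_3$, whose Beddington--DeAngelis denominators couple to both prey variables and must be bounded by the constants $\alpha^{l,u},\beta^u,\gamma^{l,u}$ in a way compatible with the last two inequalities of \eqref{H2}. A secondary subtlety is verifying nondegeneracy of the Jacobian in the degree computation, where careful use of $\hat b_{11}\hat b_{22}\ne\hat b_{12}\hat b_{21}$ is essential.
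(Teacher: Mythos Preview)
Your overall framework matches the paper's: the logarithmic change of variables, the choice of $L$, $N$, $P$, $Q$, and the a~priori bounds obtained from the integrated balance identities at extremal points all proceed as you describe (you have swapped the roles of $\xi_i$ and $\eta_i$ relative to the paper, but that is only notation). The chain of inequalities leading to $H_{i2}\le u_i(t)\le H_{i1}$ is essentially the same.

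The genuine gap is in your verification of condition~(b). You propose to compute the Brouwer degree of $QN$ on $\Omega\cap\mathbb R^{3}$ directly, asserting that the Jacobian of $QN$ at a root ``contains $\hat b_{11}\hat b_{22}-\hat b_{12}\hat b_{21}$ as a nonvanishing factor''. This is not justified. Because of the Beddington--DeAngelis terms, the $(1,3)$ and $(2,3)$ entries of $D(QN)$ are nonzero (they come from $\partial_{u_3}$ of the $c_i$-fractions), and the $(1,1)$, $(2,2)$ entries pick up extra positive contributions from $\beta e^{u_i}$ in the denominators. The Jacobian is therefore a full $3\times 3$ matrix whose determinant does \emph{not} factor as $(\hat b_{11}\hat b_{22}-\hat b_{12}\hat b_{21})$ times a term of definite sign. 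Nor have you shown that $QN=\mathbf 0$ has a unique constant root, so even with nonsingular Jacobians the signed sum defining the degree could cancel.

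The paper resolves this with an auxiliary homotopy: it deforms $QN$ to the map $G$ obtained by deleting the $c_i$-terms, via $H_\mu=\mu\,QN+(1-\mu)G$, and checks that the same estimates keep every zero of $H_\mu$ inside $\Omega$. For $G$ the first two components no longer depend on $u_3$, so $DG$ is block lower triangular; the upper $2\times 2$ block has determinant $(\hat b_{11}\hat b_{22}-\hat b_{12}\hat b_{21})e^{u_1+u_2}$, and the $(3,3)$ entry has a fixed sign by monotonicity in $u_3$. In addition, the first two equations of $G=\mathbf 0$ are linear in $(e^{u_1},e^{u_2})$ with nonsingular coefficient matrix, and the third is monotone in $u_3$, so $G$ has a unique root and the degree is $\pm 1$. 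This homotopy step is the missing ingredient in your argument; once it is inserted, the rest of your outline goes through.
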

\begin{proof}
  By putting 
$x_i(t)=e^{u_i(t)} (i\geq 1)$,  \eqref{E1} becomes
\begin{equation}\label{E14}
\begin{cases}
u'_1=a_1-b_{11}e^{u_1} -b_{12}e^{u_2}- \frac{c_1e^{u_3}}{\alpha+\beta e^{u_1}+\gamma e^{u_3}},\\
u'_2=a_2-b_{21}e^{u_1} -b_{22}e^{u_2}- \frac{c_2e^{u_3}}{\alpha+\beta e^{u_2}+\gamma e^{u_3}},\\
u'_3=-a_3+ \frac{d_1e^{u_1}}{\alpha+\beta e^{u_1}+\gamma e^{u_3}}+\frac{d_2e^{u_2}}{\alpha+\beta e^{u_2}+\gamma e^{u_3}}.
\end{cases}
\end{equation}
Let 
\begin{equation*}
\begin{aligned} 
\mathbb X&=\mathbb Y\\
&=\{u=(u_1, u_2, u_3)^T\in  C^1(\mathbb R,\mathbb R^{\text{$3$}})\,{\text{such that}}\\
& \hspace{1cm}u_i(s)=u_i(s+\omega) \text{ for } s\in \mathbb R \text{ and   $i$} \geq \text{$1$}\},
\end{aligned}
\end{equation*}
with norm
$$ ||u||=\sum_{i=1}^3 \max_{s\in [0,\omega]} |u_i(s)|, \hspace{1cm} u\in \mathbb X.$$
Then both $\mathbb X$ and $ \mathbb Y$ are  Banach spaces.
Let 
\begin{equation*}
\begin{aligned}
&N
\begin{bmatrix}
u_1\\
u_2\\
u_3
\end{bmatrix} (s)
=
\begin{bmatrix}
N_1(s)\\
N_2(s)\\
N_3(s)
\end{bmatrix}\\
&
=
\begin{bmatrix}
a_1-b_{11}e^{u_1} -b_{12}e^{u_2}- \frac{c_1e^{u_3}}{\alpha+\beta e^{u_1}+\gamma e^{u_3}}\\
a_2-b_{21}e^{u_1} -b_{22}e^{u_2}- \frac{c_2e^{u_3}}{\alpha+\beta e^{u_2}+\gamma e^{u_3}}\\
-a_3+ \frac{d_1e^{u_1}}{\alpha+\beta e^{u_1}+\gamma e^{u_3}}+\frac{d_2e^{u_2}}{\alpha+\beta e^{u_2}+\gamma e^{u_3}}
\end{bmatrix},\\
&L
\begin{bmatrix}
u_1\\
u_2\\
u_3
\end{bmatrix}
=
\begin{bmatrix}
u_1'\\
u_2'\\
u_3'
\end{bmatrix},\\
&P
\begin{bmatrix}
u_1\\
u_2\\
u_3
\end{bmatrix}=
Q
\begin{bmatrix}
u_1\\
u_2\\
u_3
\end{bmatrix}=
\begin{bmatrix}
\frac{1}{\omega} \int_0^\omega u_1(s) ds\\
\frac{1}{\omega} \int_0^\omega u_2(s) ds\\
\frac{1}{\omega} \int_0^\omega u_3(s) ds
\end{bmatrix}.
\end{aligned}
\end{equation*}
Hence, 
$$\mathop{\rm Ker} L= {\mathbb R^{\text 3}}, \mathop{\rm Im} L=\{u\in \mathbb Y\,{|}\, \text{$\int_0^\omega u_i(s)ds=0, \, i\geq1\}$},$$
and $\mathop{\rm dim} \mathop{\rm Ker} L=3=\mathop{\rm codim} \mathop{\rm Im} L.$
Then, it is easy to obtain the following conclusions.
\begin{enumerate}
  \item  $L$ is a Fredholm mapping of index zero, since $\mathop{\rm Im} L$ is closed in $\mathbb Y$. 
  \item $P$ and $ Q$ are continuous projections such that $\mathop{\rm Im} P=\mathop{\rm Ker} L, \mathop{\rm Im} L=\mathop{\rm Ker} Q=\mathop{\rm Im} (I-Q)$. 
  \item The generalized inverse (to $L$) $K_P\colon\mathop{\rm Im} L \to {\mathcal D}(L)\cap \mathop{\rm Ker}  P$ exists and is given by
\begin{equation*}
K_P
\begin{bmatrix}
u_1\\
u_2\\
u_3
\end{bmatrix} (\nu)
=
\begin{bmatrix}
\int_0^\nu u_1(s)ds-\frac{1}{\omega}\int_0^\omega \int_0^\nu u_1(s)dsd\nu\\
\int_0^\nu u_2(s)ds-\frac{1}{\omega}\int_0^\omega \int_0^\nu u_2(s)dsd\nu\\
\int_0^\nu u_3(s)ds-\frac{1}{\omega}\int_0^\omega \int_0^\nu u_3(s)dsd\nu\\
\end{bmatrix}.
\end{equation*}
\item
 $QN$ and $K_P(I-Q)N$ are  continuous. 
\item  $N$ is $L$-compact on $\bar \Omega$ with any open bounded set $\Omega\subset \mathbb X.$
\end{enumerate}

Let us now find an appropriate open, bounded subset $\Omega$ for application of the continuation theorem.  Obviously, from   \eqref{H2},  $-\infty< L_{i2}\leq L_{i1}<\infty \, (i\geq 1).$ Corresponding to the  equation $Lu=\lambda N u, \lambda\in (0,1),$ we have
\begin{align}
&u_1'=\lambda\Big[a_1-b_{11}e^{u_1} -b_{12}e^{u_2}- \frac{c_1e^{u_3}}{\alpha+\beta e^{u_1}+\gamma e^{u_3}}\Big], \notag\\
&u_2'=\lambda\Big[a_2-b_{21}e^{u_1} -b_{22}e^{u_2}- \frac{c_2e^{u_3}}{\alpha+\beta e^{u_2}+\gamma e^{u_3}}\Big],   \label{E15}\\
&u_3'=\lambda\Big[-a_3+ \frac{d_1e^{u_1}}{\alpha+\beta e^{u_1}+\gamma e^{u_3}}\notag\\
&\hspace{1.2cm}+\frac{d_2e^{u_2}}{\alpha+\beta e^{u_2}+\gamma e^{u_3}}\Big].\notag
\end{align}
Suppose that $(u_1, u_2, u_3)^T\in \mathbb X$ is an arbitrary solution of \eqref{E15}. Integrating both the hand sides of \eqref{E15} over the interval $[0,\omega]$, we obtain
\begin{align}
&\hat a_1 \omega=\int_0^\omega\Big[b_{11}e^{u_1}+b_{12}e^{u_2}+ \frac{c_1e^{u_3}}{\alpha+\beta e^{u_1}+\gamma e^{u_3}}\Big]dt,  \notag\\
&\hat a_2 \omega=\int_0^\omega\Big[b_{21}e^{u_1}+b_{22}e^{u_2}+ \frac{c_2e^{u_3}}{\alpha+\beta e^{u_2}+\gamma e^{u_3}}\Big]dt,  \label{E16}\\
&\hat a_3 \omega=\int_0^\omega\Big[\frac{d_1e^{u_1}}{\alpha+\beta e^{u_1}+\gamma e^{u_3}}+\frac{d_2e^{u_2}}{\alpha+\beta e^{u_2}+\gamma e^{u_3}}\Big]dt. \notag
\end{align}
Combining the first equations of \eqref{E15} and \eqref{E16}, we observe that 
\begin{equation*} \label{E17}
\begin{aligned}
&\int_0^\omega|u_1'|dt \\
&\leq \lambda\Big[\int_0^\omega a_1 dt+\int_0^\omega b_{11}e^{u_1}dt\\
 &\hspace{1cm} +\int_0^\omega b_{12}e^{u_2}dt+ \int_0^\omega \frac{c_1e^{u_3}}{\alpha+\beta e^{u_1}+\gamma e^{u_3}}\Big]dt\\
 &<  2 \hat a_1 \omega.
 \end{aligned}
\end{equation*}
Similarly, we have
$\int_0^\omega |u_2'(t)|dt < 2 \hat a_2 \omega,$ 
and
\begin{align*}
&\int_0^\omega|u_3'(t)|dt \leq   \lambda\Big[\int_0^\omega a_3 dt+ \int_0^\omega \frac{d_1e^{u_1}}{\alpha+\beta e^{u_1}+\gamma e^{u_3}}dt\\
 &+ \int_0^\omega\frac{d_2e^{u_2}}{\alpha+\beta e^{u_2}+\gamma e^{u_3}}\Big]dt< 2 \hat a_3 \omega.
\end{align*}
Since $u \in \mathbb X$, there exist $\xi_i, \eta_i\in [0, \omega] \, (i\geq 1)$ such that
\begin{equation}\label{E18}
u_i(\xi_i)=\min_{t\in [0,\omega]} u_i(t), \quad u_i(\eta_i)=\max_{t\in [0,\omega]} u_i(t).
\end{equation}
From the first equation of \eqref{E16} and \eqref{E18}, we obtain
\begin{equation*}
\begin{aligned}
\hat a_1 \omega &\geq \int_0^\omega b_{11}e^{u_1(\xi_1)}dt+\int_0^\omega b_{12}e^{u_2(\xi_2)}dt\\
&=\hat b_{11} \omega e^{u_1(\xi_1)}+\hat b_{12} \omega e^{u_2(\xi_2)},
\end{aligned}
\end{equation*}
which implies that $u_1(\xi_1)< L_{11}.$ Hence,  for all $ t\geq 0$ 
$$u_1(t)\leq u_1(\xi_1)+\int_0^\omega|u_1'(t)|dt < L_{11}+ 2 \hat a_1 \omega=H_{11}.$$
Similarly, we have $u_2(t) < H_{21}$ for all $t\geq 0$.\\
 On the other hand, from the first equation of \eqref{E16} and \eqref{E18}, 
\begin{equation*}
\begin{aligned}
&\hat a_1 \omega \\
&\leq \int_0^\omega b_{11}e^{u_1(\eta_1)}dt+\int_0^\omega b_{12}e^{u_2(\eta_2)}dt+\int_0^\omega \frac{c_1(t)}{\gamma (t)}dt\\
&=\hat b_{11} \omega e^{u_1(\eta_1)}+\hat b_{12} \omega e^{u_2(\eta_2)}+\widehat{(\frac{c_1}{\gamma })}\omega\\
&\leq \hat b_{11} \omega e^{u_1(\eta_1)}+\hat b_{12} \omega e^{H_{21}}+\widehat{(\frac{c_1}{\gamma })}\omega.
\end{aligned}
\end{equation*}
Hence, 
$$u_1(t)\geq u_1(\eta_1)-\int_0^\omega |u_1'(t)|dt \geq H_{12},  \hspace{1cm} \forall  t\geq 0.$$
Similarly,  $u_2(t) \geq H_{22}$ for all $t\geq 0.$ Therefore, by putting
$B_i=\max \{|H_{i1}|, |H_{i2}|\}$, we conclude that 
$$\max_{t\in [0, \omega]} |u_i(t)| \leq B_i, \hspace{1cm} i=1,2.$$

Let us give estimates for $u_3(t)$. It follows from the third equation of \eqref{E16} and \eqref{E18} that
\begin{equation*}
\begin{aligned}
\hat a_3 \omega &\leq \int_0^\omega\Big[\frac{d_1(t)e^{H_{11}}}{\alpha^l+\gamma ^l e^{u_3(\xi_3)}}+\frac{d_2(t)e^{H_{21}}}{\alpha^l+\gamma ^l e^{u_3(\xi_3)}}\Big]dt\\
&=\frac{[\hat d_1e^{H_{11}}+\hat d_2e^{H_{21}}]\omega}{\alpha^l+\gamma ^l e^{u_3(\xi_3)}}
\end{aligned}
\end{equation*}
and 
\begin{equation*}
\begin{aligned}
\hat a_3 \omega \geq &\int_0^\omega\Big[\frac{d_1(t)e^{H_{12}}}{\alpha^u+\beta^u e^{H_{12}}+ \gamma ^u e^{u_3(\eta_3)}}\\
&+\frac{d_2(t)e^{H_{22}}}{\alpha^u+\beta^u e^{H_{22}}+\gamma ^u e^{u_3(\eta_3)}}\Big]dt\\
= &\frac{\hat d_1e^{H_{12}}\omega}{\alpha^u+\beta^u e^{H_{12}}+ \gamma ^u e^{u_3(\eta_3)}}\\
&+\frac{\hat d_2e^{H_{22}}\omega}{\alpha^u+\beta^u e^{H_{22}}+\gamma ^u e^{u_3(\eta_3)}}\\
\geq &\frac{[\hat d_1e^{H_{12}}+\hat d_2e^{H_{22}}]\omega}{2\alpha^u+\beta^u [e^{H_{12}}+e^{H_{22}}]+2\gamma ^u e^{u_3(\eta_3)}}.
\end{aligned}
\end{equation*}
Hence, $u_3(\xi_3) \leq L_{31}$ and $u_3(\eta_3) \geq L_{32}.$ We then observe that \\
\centerline{$
u_3(t) \leq u_3(\xi_3)+ \int_0^\omega|u_3'(t)| dt \leq H_{31}
$}
and
$$
u_3(t) \geq u_3(\eta_3)- \int_0^\omega|u_3'(t)| dt \geq H_{32}.$$
Therefore, by putting $B_3=\max \{|H_{31}|, |H_{32}|\},$ we get 
$$\max_{t\in [0, \omega]}|u_3| \leq B_3.$$

By the above estimates, for any solution $u\in \mathbb X$ of \eqref{E16} we have  
$||u|| \leq \sum_{i=1}^3 B_i.$
Clearly, $B_i \, (i\geq 1)$ are independent of $\lambda.$ Take $B=\sum_{i=1}^4 B_i$ where $B_4$ is taken sufficiently large such that 
\text{$B_4\geq \sum_{i=1}^3\sum_{j=1}^2 |L_{ij}|.$}
Let $\Omega=\{u\in \mathbb X \,{|}\, \text{$ ||u||<B$}\},$ 
then $\Omega$ satisfies the condition (a) of Lemma \ref{lem3}. 

Let us verify that the condition (b) of Lemma \ref{lem3} is also satisfied. Consider the homotopy
$$H_{\mu}(u)=\mu QN(u)+(1-\mu)G(u), \hspace{1cm} \mu\in[0,1]$$
where $G\colon \mathbb R^{\text 3} \to \mathbb R^{\text 3}$,
$$G(u)=
\begin{bmatrix}
\hat a_1-\hat b_{11} e^{u_1}-\hat b_{12} e^{u_2}\\
\hat a_2-\hat b_{21} e^{u_1}-\hat b_{22} e^{u_2}\\
-\hat a_3+f(u_1,u_3)+g(u_2,u_3)
\end{bmatrix}$$
with
$f(u_1, u_3)=\frac{1}{\omega} \int_0^\omega \frac{d_1e^{u_3}dt}{\alpha+\beta e^{u_1}+\gamma e^{u_3}}$ and 
 $g(u_2,u_3)= \frac{1}{\omega}\int_0^\omega \frac{d_2e^{u_3}dt}{\alpha+\beta e^{u_2}+\gamma e^{u_3}}.$
It is easy to see that
\begin{align*}
&H_{\mu}(u)\\
&=
\begin{bmatrix}
\hat a_1-\hat b_{11} e^{u_1}-\hat b_{12} e^{u_2}-\frac{1}{\omega}\int_0^\omega \frac{\mu c_1e^{u_3} dt}{\alpha+\beta e^{u_1}+\gamma e^{u_3}}\\
\hat a_2-\hat b_{21} e^{u_1}-\hat b_{22} e^{u_2} -\frac{1}{\omega}\int_0^\omega \frac{\mu c_2e^{u_3}dt}{\alpha+\beta e^{u_2}+\gamma e^{u_3}}\\
-\hat a_3+f(u_1,u_3)+g(u_2,u_3)
\end{bmatrix}.
\end{align*}
By carrying out similar arguments as above, we observe  that any solution $u^*$ of the equation $H_{\mu}(u)={\bf 0} \in \mathbb R^{\text 3}$ with $\mu\in [0,1]$ satisfies the estimate
\begin{equation} \label{E22}
L_{i2}\leq u_i^*\leq L_{i1}, \hspace{1cm} i\geq 1.
\end{equation}
Thus, ${\bf 0}\notin H_{\mu}( \partial \Omega \cap \mathop{\rm Ker} L)$ for $\mu\in [0,1]$. Consequently, by taking $\mu=1$,  we conclude that ${\bf 0}\notin QN(\partial\Omega \cap \mathop{\rm Ker} L)$. Note that the isomorphism $J$ can be the identity mapping $I$, since $\mathop{\rm Im} P=\mathop{\rm Ker} L.$ By the invariance property of homotopy, we obtain that
\begin{equation*} \label{E23}
\begin{aligned}
&\deg (JQN,  \Omega  \cap  \mathop{\rm Ker} L, {\bf 0})=\deg(QN,  \Omega \cap \mathop{\rm Ker} L, {\bf 0})\\
&=\deg(QN,  \Omega \cap {\mathbb R^{\text 3}}, {\bf 0})=\deg(G,  \text{$\Omega$} \cap \mathbb R^{\text 3}, {\bf 0})\\
&=\mathop{\rm sgn}\det \Lambda\\
&=\mathop{\rm sgn}\left\{ (\hat b_{11}\hat b_{22}-\hat b_{12}\hat b_{21})  \frac{\partial [f(u_1,u_3)+g(u_2,u_3)]}{\partial u_3}\right\}
\end{aligned}
\end{equation*}
where 
\begin{align*}
&\Lambda=
\begin{bmatrix}
-\hat b_{11} e^{u_1}& -\hat b_{12} e^{u_2} &0\\
-\hat b_{21} e^{u_1}& -\hat b_{22} e^{u_2} &0\\
\frac{\partial f(u_1,u_3)}{\partial u_3} & \frac{\partial g(u_2,u_3)}{\partial u_3} & \frac{\partial f(u_1,u_3)}{\partial u_3}+ \frac{\partial g(u_2,u_3)}{\partial u_3}
\end{bmatrix}.
\end{align*}
Since  both functions $f(u_1, u_3)$ and $ g(u_2,u_3)$  increase in $u_3$,  $
\frac{\partial f(u_1,u_3)}{\partial u_3}+ \frac{\partial g(u_2,u_3)}{\partial u_3}>0.
$ Hence, by using the first condition in \eqref{H2}, we conclude that \\
\centerline{$\deg (JQN,  \Omega  \cap  \mathop{\rm Ker} L, {\bf 0})\ne 0.$}

By now we have proved that $\Omega$ verifies all requirements of Lemma \ref{lem3}. Therefore, the equation  $Lu=Nu$ has at least one solution in ${\mathcal D}(L)\cap \bar \Omega,$ i.e. \eqref{E14} has at least one $\omega$-periodic solution $u^*$ in ${\mathcal D}(L)\cap \bar \Omega.$ Set $x_i^*=e^{u_i^*} (i\geq 1),$ then $x^*$ is an $\omega$-periodic solution of \eqref{E1} with strictly positive components. It completes the proof.
\end{proof}
\subsection{Global asymptotic  stability  of  boundary periodic solutions} 
In this subsection, we shall establish a sufficient criteria for   global asymptotic  stability of boundary $\omega$-periodic solutions  of \eqref{E1}.  Consider  the boundary dynamics of \eqref{E1} where $X_3$ is absent, i.e. $x_3(t)=0$ for every $t\geq 0$. We then consider the  periodic competitive model of two prey $X_1, X_2$:
\begin{equation}\label{E25}
\begin{cases}
x'_1=x_1\left[a_1(t)-b_{11}(t) x_1-b_{12}(t)x_2\right],\\
x'_2=x_2\left[a_2(t)-b_{21}(t) x_1-b_{22}(t)x_2\right].
\end{cases}
\end{equation}
Denote by $\bar X_i(t)$  the unique positive $\omega$-periodic solution  of the logistic equation:\\
\centerline{$X'=X\left[a_i(t)-b_{ii}(t) X\right].$}
Then
$\bar X_i(t)=\frac{e^{\int_0^\omega a_i(s)ds -1}}{\int_t^{t+\omega} b_{ii}(s) e^{-\int_s^t a(\tau)d\tau}ds}.$
Due to \cite{Lb}, if
\begin{equation} \label{E26}
\hat {a_i} > \widehat {b_{ij} \bar X_j}\hspace{1cm}  (i\ne j, i, j=1,2),
\end{equation}
then  \eqref{E25}  has a positive $\omega$-periodic solution $(\bar x_1, \bar x_2)$. Furthermore, 
if 
\begin{equation} \label{E27}
\hat A_{12}<0
\end{equation}
then $(\bar x_1, \bar x_2)$ is globally asymptotically stable, where
$a_{ij}(t)=b_{ij}(t) \bar x_j(t) \,\, ( i\ne j, i,j=1,2)$ and 
$$A_{12}(t)=\max\left\{ \frac{(a_{ij}+a_{ji})^2}{4 a_{ii}}-a_{jj}, i\ne j\right\}.$$
 Our result is as follows.
\begin{theorem} 
If \eqref{E26} and \eqref{E27} hold then $\bar x=(\bar x_1, \bar x_2, 0)$ is a  $\omega$-periodic boundary solution of \eqref{E1}. Furthermore,
\begin{itemize}
\item [{\rm (i)}]
If $b_{ij}<b_{jj} \, (1\leq i\ne j\leq 2)$ and  $ c_1+c_2+d_1+d_2< \beta a_3$ then $\bar x$ is globally asymptotically stable.
\item[{\rm (ii)}]  If $ c_1+c_2+d_1+d_2< \beta a_3$ then $\bar x$ attracts any solution $x$ of \eqref{E1} which satisfies the condition
 $$[x_1(t)-\bar x_1(t)] [x_2(t)-\bar x_2(t)] \geq 0, \hspace{1cm} \forall t\geq 0.$$
\item[{\rm (iii)}] If $ d_1+d_2< \beta a_3$ then $\bar x$ attracts any solution $x$ of \eqref{E1} which satisfies the condition
 $$x_i(t) \geq \bar x_i(t), \hspace{1cm} \forall t\geq 0, i=1, 2.$$
\end{itemize}
\end{theorem}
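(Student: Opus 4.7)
That $\bar x=(\bar x_1,\bar x_2,0)$ is an $\omega$-periodic solution of \eqref{E1} is immediate: with $x_3\equiv 0$ the third equation vanishes and the first two reduce to \eqref{E25}, whose positive $\omega$-periodic solution $(\bar x_1,\bar x_2)$ is supplied by \eqref{E26}--\eqref{E27}. For the attractivity claims I would use the Lyapunov functional
\[
V(t)=|\ln x_1-\ln\bar x_1|+|\ln x_2-\ln\bar x_2|+Kx_3(t),\qquad K>0,
\]
and compute its upper right Dini derivative along a solution. The key identity, obtained by subtracting the logarithmic derivative of $\bar x_i$ from that of $x_i$, is
\[
(\ln x_i-\ln\bar x_i)'=-b_{i1}(x_1-\bar x_1)-b_{i2}(x_2-\bar x_2)-\frac{c_i x_3}{\alpha+\beta x_i+\gamma x_3}.
\]
Combining $x_i/(\alpha+\beta x_i+\gamma x_3)\le 1/\beta$ with the third equation of \eqref{E1} yields $x_3'/x_3\le -(a_3-(d_1+d_2)/\beta)$, so under any of the three hypotheses $x_3(t)\to 0$ exponentially and $\int_0^\infty x_3\,dt<\infty$.

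The analysis then proceeds by case on the signs of $x_i-\bar x_i$. Under the hypothesis $x_i\ge\bar x_i$ of (iii), every summand of the displayed identity is non-positive, so $\ln x_i-\ln\bar x_i$ is non-increasing; the pointwise bound $(\ln x_i-\ln\bar x_i)'\le -b_{ii}^l(x_i-\bar x_i)$ yields $\int_0^\infty(x_i-\bar x_i)\,dt<\infty$, and Barbalat's lemma closes the case. For the remaining sub-case of (ii), namely both $x_i\le\bar x_i$, the sum of the first two contributions to $D^+V$ is
\[
-(b_{11}+b_{21})|x_1-\bar x_1|-(b_{22}+b_{12})|x_2-\bar x_2|+\sum_{i=1}^{2}\frac{c_i x_3}{\alpha+\beta x_i+\gamma x_3},
\]
while $D^+(Kx_3)\le -K(a_3-(d_1+d_2)/\beta)x_3$; choosing $K$ large enough that $K(a_3-(d_1+d_2)/\beta)\ge (c_1+c_2)/\alpha^l$---an inequality admissible under $c_1+c_2+d_1+d_2<\beta a_3$---produces $D^+V\le -\mu(|x_1-\bar x_1|+|x_2-\bar x_2|+x_3)$. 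For (i) one must also handle the opposite-sign case $(x_1-\bar x_1)(x_2-\bar x_2)<0$, where the cross competition terms $b_{ij}\,\mathrm{sgn}(x_i-\bar x_i)(x_j-\bar x_j)$ flip sign and the effective diagonal coefficient becomes $-(b_{ii}-b_{ji})$, strictly negative precisely under $b_{ij}<b_{jj}$.

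Once the bound $D^+V(t)\le -\mu(|x_1-\bar x_1|+|x_2-\bar x_2|+x_3)$ is secured, integration on $[0,T]$ delivers absolute integrability of $|x_i-\bar x_i|$ and $x_3$ on $[0,\infty)$, and Barbalat's lemma (using that $x_i'$ and $x_3'$ are bounded thanks to the a priori bounds on $x$ invoked in the proof of Theorem~\ref{thm:5}) yields $x_i-\bar x_i\to 0$ and $x_3\to 0$. \textbf{The main obstacle} will be the choice of $K$ in the ``both $x_i\le\bar x_i$'' sub-case: the natural upper bound $c_ix_3/(\alpha+\beta x_i+\gamma x_3)\le(c_i/\alpha^l)x_3$ involves $1/\alpha^l$ while the $x_3$-decay rate involves $1/\beta$, so matching the two coefficients to produce a uniformly negative combination is exactly where the strict inequality $c_1+c_2+d_1+d_2<\beta a_3$ is used; the opposite-sign case of (i) is the other delicate step, being the point where the signature of the competition matrix---i.e., the condition $b_{ij}<b_{jj}$---becomes indispensable.
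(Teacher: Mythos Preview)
Your approach is essentially the paper's: the same Lyapunov function $V=\sum_{i=1}^2|\ln x_i-\ln\bar x_i|+Kx_3$ (the paper simply takes $K=1$), the same Dini-derivative computation, and the same appeal to Barbalat's lemma; rather than splitting into sign sub-cases, the paper applies the uniform worst-case bound $-b_{ij}\,\mathrm{sgn}(x_i-\bar x_i)(x_j-\bar x_j)\le b_{ij}|x_j-\bar x_j|$ and collects the $c_i$, $d_i$ and $a_3$ contributions into the single coefficient $(c_1+c_2+d_1+d_2-\beta a_3)/\beta$ in front of $x_3$. One small correction: the boundedness of $x$ and $x'$ needed for Barbalat does not come from Theorem~\ref{thm:5} (those estimates pertain to the auxiliary equation $Lu=\lambda Nu$, not to arbitrary solutions of \eqref{E1}); it follows directly from the logistic structure of the first two equations and the inequality $x_3'/x_3\le (d_1+d_2)/\beta - a_3$.
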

\begin{proof}
The first statement is obvious. To prove (i), let  $x$ be any other solution of \eqref{E1}. 
Consider a Lyapunov function defined by
 $V(t)=\sum_{i=1}^2 |\ln x_i-\ln \bar x_i|+ x_3,  t \geq 0.$
Calculating of the right derivative $D^{+}V(t)$ of  $V(t)$ along the solutions of \eqref{E1} gives
\begin{align} \label{E28}
&D^+V(t) \notag\\
= &\sum_{i=1}^2 \mathop{\rm sgn}(x_i-\bar x_i) (\frac{x_i'}{x_i}-\frac{\bar x_i'}{\bar x_i}) +x_3' \notag\\
= &\sum_{i\ne  j}^2 \Big\{\mathop{\rm sgn}(x_i-\bar x_i) \Big[(a_i-b_{ii}x_i-b_{ij}x_j\notag\\
&-\frac{c_i x_i x_3}{\alpha+\beta x_i+\gamma x_3})-(a_i-b_{ii} \bar x_i-b_{ij} \bar x_j)\Big] \Big\}\notag\\
&+(-a_3+\sum_{i=1}^2 \frac{d_i x_i}{\alpha+\beta x_i+\gamma x_3}) x_3\notag\\
=&\sum_{i\ne  j}^2 \left\{[ -b_{ii}|x_i-\bar x_i|-b_{ij} (x_j-\bar x_j) \mathop{\rm sgn}(x_i-\bar x_i)  ]\right\}\notag\\
&+ x_3 \Big [-a_3+\sum_{i=1}^2 \Big\{\frac{d_i x_i}{\alpha+\beta x_i+\gamma x_3}\notag\\
&\hspace{3cm}-\frac{c_i x_i \,\mathop{\rm sgn}(x_i-\bar x_i)}{\alpha+\beta x_i+\gamma x_3}\Big\}\Big].
\end{align}
Then
\begin{align}
D^+V(t) \leq & \sum_{i\ne  j}^2 (b_{ij}-b_{jj}) |x_j-\bar x_j| \notag\\
&+\frac{(c_1+c_2+d_1+d_2-\beta a_3) x_3}{\beta}. \label{E13}
\end{align}
By assumptions in (i) and the periodicity of parameters, there exist $\mu_1>0$ such that
\begin{align*}
\max_{t\in [0,\omega], 1\leq i\ne j\leq 2} &\left\{ \frac{c_1+c_2+d_1+d_2-\beta a_3}{\beta}, b_{ij}-b_{jj}\right\}\\
&<-\mu_1.
\end{align*}
Thus, by integrating both the hand sides of \eqref{E13} from  $0$ to $t$, we observe that\\
\centerline{$V(t)+\mu_1 \int_0^{t}\sum_{i=1}^3 |x_i-\bar x_i|ds \leq V(0) <\infty$}
for every $t \geq 0$.
Hence,
$ \sum_{i=1}^3 |x_i-\bar x_i| \in L^1([0, \infty)).$

On the other hand, by the periodicity,  $x_i$ and  $\bar x_i \, (i\geq 1)$ have bounded derivatives on $[0,\infty)$. As a consequence,
$ \sum_{i=1}^3 |x_i-\bar x_i|$ is uniformly continuous on $[0, \infty)$. 
Therefore, by using the Barbalat lemma \cite{TVT2}, we conclude that\\
 \centerline{$\underset{t \to \infty}{\lim}\sum_{i=1}^3 |x_i-\bar x_i|=0,$}
i.e.  $\bar x$ is globally asymptotically stable. 

Similarly,  we obtain the conclusions in (ii) and (iii) by using the following inequalities, respectively.
\begin{align*}
&D^+V(t)\\
 =&\sum_{i\ne  j}^2 [ -(b_{ii}+b_{ji}) |x_i-\bar x_i|]+ x_3 \Big [-a_3\notag\\
&+\sum_{i=1}^2 \Big\{\frac{d_i x_i}{\alpha+\beta x_i+\gamma x_3}-\frac{c_i x_i \,\mathop{\rm sgn}(x_i-\bar x_i)}{\alpha+\beta x_i+\gamma x_3}\Big\}\Big]\\
\leq & \sum_{i\ne  j}^2 [ -(b_{ii}+b_{ji}) |x_i-\bar x_i|]\notag\\
&+\frac{(c_1+c_2+d_1+d_2-\beta a_3) x_3}{\beta},
\end{align*}
and
\begin{align*}
&D^+V(t) \\
=&\sum_{i\ne  j}^2 [ -(b_{ii}+b_{ji}) |x_i-\bar x_i|]+ x_3 \Big [-a_3\notag\\
&+\sum_{i=1}^2 \Big\{\frac{d_i x_i}{\alpha+\beta x_i+\gamma x_3}-\frac{c_i x_i }{\alpha+\beta x_i+\gamma x_3}\Big\}\Big]\\
\leq & \sum_{i\ne  j}^2 \left[ -(b_{ii}+b_{ji}) |x_i-\bar x_i|\right]+\frac{(d_1+d_2-\beta a_3) x_3}{\beta}.
\end{align*}
We complete the proof.
\end{proof}
\subsection{Numerical examples}
In this subsection, we exhibit some numerical examples which show the convergence of  positive solutions of  \eqref{E1} to  periodic solutions of  \eqref{E1}. 
Set $a_1=3+\sin(8t); a_2=5.5-0.2\cos(8t); a_3=0.4-0.3\cos(8t); b_{11}=2+\cos(8t); b_{22}=5+0.4\sin(8t); b_{12}=0.04-0.02\sin(8t); b_{21}=0.15-0.1\cos(8t); c_1=0.5-0.4\sin(8t);$ $ c_2=0.4-0.3\sin(8t); \alpha=0.03-0.02\cos(8t);
\beta=0.3+0.2\cos(8t);
\gamma=2-\sin (8t);
d_1=3+2 \sin(8t);
 d_2=3-2 \sin(8t);
$ and an initial value $(x_1(0),x_2(0),x_3(0))=(0.5,0.7,1).$ Figure \ref{Fig1} shows the behavior of the solution of \eqref{E1}. It is seen that the solution converges to a positive periodic solution of \eqref{E1}.
 \begin{figure}[h] 
\begin{center}
\includegraphics[scale=0.7]{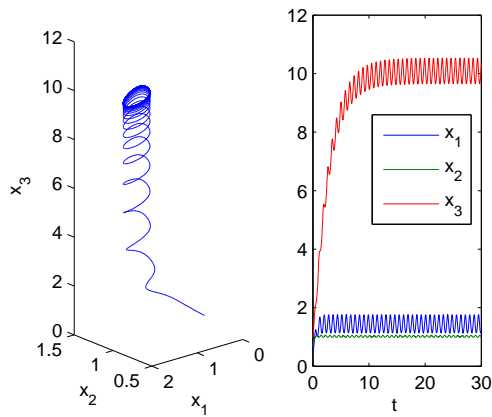}
\end{center}
\vspace*{9pt}
\caption{ A solution of  \eqref{E1} which converges to a positive periodic solution of \eqref{E1}}
\label{Fig1}
\end{figure}

We now set $a_3=4-0.3\cos(8t)$ and $\beta=3+0.2\cos(8t)$ and retain other parameters as above. Figure \ref{Fig2} gives the behavior of the positive solution of \eqref{E1}. It converges to the boundary periodic solution of \eqref{E1}.

 \begin{figure}[h] 
\begin{center}
\includegraphics[scale=0.7]{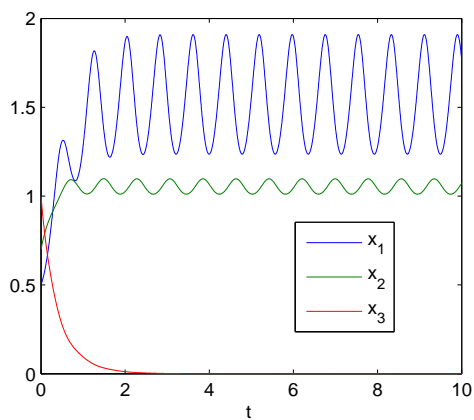}
\end{center}
\vspace*{9pt}
\caption{A  positive solution of  \eqref{E1} which converges to the boundary periodic solution of \eqref{E1}}
\label{Fig2}
\end{figure}
\section*{Acknowledgement}
 The work of the last author is supported by JSPS KAKENHI Grant Number 20140047. The authors would like to thank the anonymous referees for their helpful suggestions which improved the
paper. 

\end{document}